\documentclass[12pt, reqno]{amsart}
\usepackage{amsmath, amsthm, amscd, amsfonts, amssymb, graphicx, color}
\usepackage[bookmarksnumbered, colorlinks, plainpages]{hyperref}
\hypersetup{colorlinks=true,linkcolor=red, anchorcolor=green, citecolor=cyan, urlcolor=red, filecolor=magenta, pdftoolbar=true}

\textheight 22.7truecm \textwidth 16.8truecm
\setlength{\oddsidemargin}{0.05in}\setlength{\evensidemargin}{0.05in}

\setlength{\topmargin}{-.5cm}
\newtheorem{theorem}{Theorem}[section]
\newtheorem{lemma}[theorem]{Lemma}

\newtheorem{corollary}[theorem]{Corollary}
\theoremstyle{definition}
\newtheorem{definition}[theorem]{Definition}
\newtheorem{example}[theorem]{Example}

\theoremstyle{remark}

\numberwithin{equation}{section}

\begin{document}
	
	\setcounter{page}{1}
	
	\title[Some Fixed Point Results for Interpolative Contractive Mappings]{Some Fixed Point Results for Interpolative Contractive Mappings in $C^{\ast}$-algebra valued metric spaces }
	
	\author[H. Massit, M. Rossafi, C.  Park]{Hafida Massit$^{1}$, Mohamed Rossafi$^{2}$  \MakeLowercase{and} Choonkil Park$^{3*}$}
	
		\address{$^{1}$Laboratory Partial Differential Equations, Spectral Algebra and Geometry, Department of Mathematics, Faculty of Sciences, University of Ibn Tofail, Kenitra, Morocco}
	\email{massithafida@yahoo.fr}
	
		\address{$^{2}$Laboratory Partial Differential Equations, Spectral Algebra and Geometry, Higher School of Education and Training, University of Ibn Tofail, Kenitra, Morocco}
	\email{mohamed.rossafi1@uit.ac.ma}
	
	\address{$^{3}$Research Institute for Natural Sciences, Hanyang University, Seoul 04763, Republic of Korea}
	\email{\textcolor[rgb]{0.00,0.00,0.84}{baak@hanyang.ac.kr}}

	\subjclass[2010]{41A58, 42C15, 46L05.}
	
	\keywords{Fixed point, $C^{\ast}$-algebra valued metric spaces, interpolative Kannan–Reich contraction.}
	
	\date{$^{*}$Corresponding author}

	\begin{abstract} In this  paper, we present a new concept of interpolative 
		 contraction mappings in $C^{\ast}$-algebra valued complete metric space and we prove the existence of fixed points and common fixed points for Kannan-Riech type contractions.
	\end{abstract}
	
		\maketitle

\baselineskip=12.4pt

	\section{Introduction and preliminaries}

	In $1968$ Kannan introduced a new result that there is a discontinuous mapping that satisfies certain conditions and admits a fixed point in a complete metric spaces.  This result stills one of the most important results that generalizes the famous Banach contraction principle.
	\begin{definition} Let $\mathcal{U}$ be a metric space space. A self-mapping $T:\mathcal{U}\rightarrow\mathcal{U}$ is said to be a Kannan contraction if there exists $\tau \in [0,\dfrac{1}{2}[$ such that 
		\begin{equation*}
			d(Tx,Ty)\leq \tau( d(x,Tx)+d(y,Ty)); \forall x,y\in \mathcal{U}
		\end{equation*}
	\end{definition}
	In 1969, Kannan \cite{KAN} proved that if $  (\mathcal{U},d)$ is complete metric space, then every Kannan contraction on $ \mathcal{U} $ has a unique fixed point. Let $ \mathcal{U} $ be a metric space. A self-mapping $ T $ on $ \mathcal{U} $ is called Reich contraction if there exists  $r\in \left[0,\frac{1}{3} \right[   $ such that 
	$$  d(Tu,Tv) \leq r d(u,v)+ r d(u,Tu) + r d(v,Tv),  \forall  u, v \in \mathcal{U}.$$
	 \\  
	In 1972, Reich \cite{RE} proved that if $  (\mathcal{U},d)$ is complete metric space, then every Reich contraction on $ \mathcal{U} $ has a unique fixed point. It is interesting that Riech’s theorem is generalized of the Banach contraction principle \cite{BA} and Kannan contraction mapping.
	
	Let $ \mathcal{U} $ be a metric space. A self-mapping $ T $ on $ \mathcal{U} $ is called weak contraction  if there exists $ \varphi : \mathcal{U}\rightarrow \mathbb{R^{+}}$ be a lower semicontinuous function and  $ \phi : \mathcal{U}\rightarrow \mathbb{R^{+}}$ be an altering distance function such that 
	$$  \phi(d(Tu,Tv)) \leq  \phi (d(u,v))-\varphi(d(u,v)). $$
	Weak contraction principle was first introduced by Alber et al in Hilbert spaces \cite{YA}.
	
	Recently, Errai et al \cite{YS} announced the notion of interpolative
	Kannan Contraction on $ b $-metric space, they also proved a
	some fixed point results of Interpolative Kannan contraction theorem. Moreover, the notion of Interpolative Kannan contraction are studied by many authors in the field of fixed point theory (see  \cite{M, KARAP} and references therein).
	
\begin{theorem}
If $(\mathcal{U},d)$ is a complete metric space, then
	every Kannan contraction on $\mathcal{U}$ has a unique fixed point.
\end{theorem}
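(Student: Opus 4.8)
The plan is to run the Picard iteration and show it produces a Cauchy sequence whose limit is the desired fixed point. Fix an arbitrary $x_0\in\mathcal{U}$ and set $x_{n+1}=Tx_n$ for all $n\ge 0$. First I would apply the Kannan condition to the pair $(x_n,x_{n-1})$ to obtain
\[
d(x_{n+1},x_n)=d(Tx_n,Tx_{n-1})\le \tau\bigl(d(x_n,x_{n+1})+d(x_{n-1},x_n)\bigr),
\]
and then solve for $d(x_{n+1},x_n)$: since $\tau<\tfrac12$ we have $1-\tau>0$, so $d(x_{n+1},x_n)\le \lambda\, d(x_n,x_{n-1})$ with $\lambda:=\tau/(1-\tau)\in[0,1)$. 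Iterating this gives $d(x_{n+1},x_n)\le \lambda^{n} d(x_1,x_0)$ for every $n$.

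Next I would show that $(x_n)$ is Cauchy. For $m>n$, the triangle inequality together with the geometric estimate above yields
\[
d(x_m,x_n)\le \sum_{k=n}^{m-1} d(x_{k+1},x_k)\le \Bigl(\sum_{k=n}^{m-1}\lambda^{k}\Bigr)d(x_1,x_0)\le \frac{\lambda^{n}}{1-\lambda}\, d(x_1,x_0),
\]
and the right-hand side tends to $0$ as $n\to\infty$ because $\lambda<1$. By completeness of $(\mathcal{U},d)$ there is a point $x^{*}\in\mathcal{U}$ with $x_n\to x^{*}$.

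To see that $x^{*}$ is a fixed point, I would estimate $d(x^{*},Tx^{*})$ through $x_{n+1}$: using the triangle inequality and then the Kannan condition on the pair $(x_n,x^{*})$,
\[
d(x^{*},Tx^{*})\le d(x^{*},x_{n+1})+d(Tx_n,Tx^{*})\le d(x^{*},x_{n+1})+\tau\bigl(d(x_n,x_{n+1})+d(x^{*},Tx^{*})\bigr),
\]
so $(1-\tau)d(x^{*},Tx^{*})\le d(x^{*},x_{n+1})+\tau\, d(x_n,x_{n+1})$; letting $n\to\infty$ forces $d(x^{*},Tx^{*})=0$, i.e.\ $Tx^{*}=x^{*}$. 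Finally, for uniqueness, if $y^{*}$ is another fixed point then $d(x^{*},y^{*})=d(Tx^{*},Ty^{*})\le \tau\bigl(d(x^{*},Tx^{*})+d(y^{*},Ty^{*})\bigr)=0$, hence $x^{*}=y^{*}$.

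As for the main obstacle: there is no deep difficulty here, and the one point that requires care is that the Kannan inequality does not directly bound $d(Tx,Ty)$ by a multiple of $d(x,y)$, so one must first absorb the term $d(x_n,x_{n+1})$ appearing on the right-hand side; it is precisely the hypothesis $\tau<\tfrac12$ (rather than merely $\tau<1$) that makes the resulting ratio $\lambda=\tau/(1-\tau)$ strictly less than $1$, after which the argument is the routine geometric-series Cauchy estimate.
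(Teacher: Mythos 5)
Your proof is correct and complete: the absorption step $(1-\tau)d(x_{n+1},x_n)\le\tau\,d(x_n,x_{n-1})$ giving the ratio $\lambda=\tau/(1-\tau)<1$, the geometric-series Cauchy estimate, the fixed-point verification, and the uniqueness argument are all sound. Note, however, that the paper states this theorem without proof (it is Kannan's classical result, cited to \cite{KAN}), so there is no in-paper argument to compare against line by line; the closest analogues are Lemma \ref{l1} and Theorem \ref{t1}, where the authors run the same Picard-iteration and geometric-series scheme for their $C^{\ast}$-algebra-valued interpolative contractions. Your treatment of the final step is actually the more careful one: where the paper's own proofs pass to the limit ``using the continuity of $T$'' (an assumption not contained in the hypotheses), you instead bound $d(x^{*},Tx^{*})\le d(x^{*},x_{n+1})+\tau\bigl(d(x_n,x_{n+1})+d(x^{*},Tx^{*})\bigr)$ and absorb the $d(x^{*},Tx^{*})$ term using $\tau<\tfrac12$. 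This matters precisely because, as the introduction itself emphasizes, Kannan contractions need not be continuous, so the continuity-free argument you give is the one that actually establishes the theorem in full generality.
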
	
	In 2018, Karapinar published a new type of contraction
	obtained from the definition of the Kannan contraction by
	interpolation as follows.

	\begin{definition}
		Let $(\mathcal{U},d)$ be a metric space. A self-mapping $T:\mathcal{U}\rightarrow \mathcal{U}$ is said to be an interpolative Kannan type contraction if there are two constants $\tau,\beta \in]0,1[$such that 
		$$d(Tx,Ty)\leq \tau (d(x,Tx)^{\beta})( d(y,Ty))^{1-\beta}$$
		for all $x,y \in \mathcal{U}$ with $x\not=Tx$ and $y\not=Ty$.
	\end{definition}
	Karapinar obtained the following result.
	\begin{theorem}
	Let $(\mathcal{U},d)$	be a complete metric space and $T:\mathcal{U}\rightarrow \mathcal{U}$ be an interpolative Kannan-type contraction mapping. Then $T$  has a unique fixed point.
	\end{theorem}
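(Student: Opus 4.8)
The plan is to run the Picard iteration and show it converges geometrically to a fixed point. Fix an arbitrary $x_0\in\mathcal{U}$ and put $x_{n+1}=Tx_n$ for $n\geq 0$. If $x_{n_0}=x_{n_0+1}$ for some index $n_0$, then $x_{n_0}$ is already a fixed point and there is nothing more to prove for existence; so from now on assume $x_n\neq Tx_n$ (equivalently $d(x_n,x_{n+1})>0$) for every $n$.

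The first step is to extract a geometric decay estimate for consecutive iterates. Applying the interpolative Kannan inequality to the pair $x=x_{n-1}$, $y=x_n$ gives
\[
d(x_n,x_{n+1})=d(Tx_{n-1},Tx_n)\leq \tau\, d(x_{n-1},x_n)^{\beta}\, d(x_n,x_{n+1})^{1-\beta}.
\]
Since $d(x_n,x_{n+1})>0$ we may divide by $d(x_n,x_{n+1})^{1-\beta}$ to obtain $d(x_n,x_{n+1})^{\beta}\leq\tau\, d(x_{n-1},x_n)^{\beta}$, hence $d(x_n,x_{n+1})\leq\lambda\, d(x_{n-1},x_n)$ with $\lambda:=\tau^{1/\beta}\in(0,1)$. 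Iterating yields $d(x_n,x_{n+1})\leq\lambda^{n}d(x_0,x_1)$, and then the triangle inequality together with the convergent series $\sum_n\lambda^{n}$ shows that $\{x_n\}$ is Cauchy. By completeness, $x_n\to u$ for some $u\in\mathcal{U}$.

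Next I would identify $u$ as a fixed point. Suppose, for contradiction, that $u\neq Tu$. Then both $x_n\neq Tx_n$ and $u\neq Tu$ hold, so the interpolative inequality applies to the pair $(x_n,u)$:
\[
d(x_{n+1},Tu)=d(Tx_n,Tu)\leq\tau\, d(x_n,x_{n+1})^{\beta}\, d(u,Tu)^{1-\beta}.
\]
Letting $n\to\infty$ and using $d(x_n,x_{n+1})\to 0$ (while $d(u,Tu)^{1-\beta}$ stays a fixed positive number), the right-hand side tends to $0$, so $x_{n+1}\to Tu$; since also $x_{n+1}\to u$, uniqueness of limits forces $Tu=u$, a contradiction. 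Hence $Tu=u$.

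Finally, for uniqueness, if $u\neq v$ were two fixed points then
\[
d(u,v)=d(Tu,Tv)\leq\tau\, d(u,Tu)^{\beta}\, d(v,Tv)^{1-\beta}=0,
\]
forcing $u=v$. The step I expect to be the main obstacle is precisely this one: the interpolative inequality is imposed only at points that are \emph{not} fixed by $T$, so its use at a pair of fixed points is exactly the delicate place in the argument, and one must either interpret the hypothesis as covering that degenerate case or supply a separate reason ruling out two coexisting fixed points. The rest of the argument is routine once the estimate $d(x_n,x_{n+1})\leq\lambda^{n}d(x_0,x_1)$ is available.
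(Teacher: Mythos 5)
Your existence argument is correct and is essentially the route the paper itself follows: the statement is quoted from Karapinar without proof here, but the paper's proof of its $C^{\ast}$-algebra-valued analogue (Theorem 2.2) uses exactly your scheme -- Picard iteration, cancellation of a common power to get a linear decay estimate (you obtain the ratio $\tau^{1/\beta}$; the paper works with the exponent $1-\beta$ and the cruder bound $\tau$, which serves the same purpose), a geometric-series Cauchy argument, completeness, and identification of the limit. In fact your identification step is cleaner than the paper's: the paper passes to the limit in $d(Tx_n,Tz)\preceq \tau\, d(x_n,Tx_n)^{\beta} d(z,Tz)^{1-\beta}$ ``using the continuity of $T$,'' which is not a hypothesis, whereas your contradiction argument (assume $z\neq Tz$, apply the contractive inequality to the admissible pair $(x_n,z)$, deduce $x_{n+1}\to Tz$ and hence $Tz=z$) needs no continuity at all.

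The difficulty you flag in the uniqueness step is a genuine gap, and it cannot be repaired. The interpolative Kannan condition is imposed only for pairs with $x\neq Tx$ and $y\neq Ty$, so it says nothing about two fixed points, and the computation $d(u,v)\leq \tau\, d(u,Tu)^{\beta} d(v,Tv)^{1-\beta}=0$ is not licensed by the hypothesis; the paper's own uniqueness argument for Theorem 2.2 makes exactly this illegitimate application. Moreover, uniqueness is simply false for this class of mappings: the identity map on any complete metric space with at least two points satisfies the definition vacuously (there are no admissible pairs), yet every point is fixed. This is the well-known correction to the original interpolative Kannan statement: the conclusion one can actually prove is the existence of a fixed point, not its uniqueness. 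So your proposal establishes existence correctly (and more carefully than the paper), but the uniqueness clause of the statement is unprovable as written, for you and for the paper alike, unless the contractive condition is strengthened so as to apply to all pairs of distinct points.
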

 Recently, Ma et al \cite{Z.Ma} announced the notion of $ C^{\ast}- $algebra-valued metric space and formulated some first fixed point theorems in the $ C^{\ast}- $algebra-valued metric space. Many authors initiated and studied many existing fixed point theorems in such spaces, see \cite{MR}, \cite{MU}, \cite {S}.\\
 Throughout this paper, we denote $  \mathbb{A}$ by an unital (i.e ,unity element I) $C^{\ast}$-algebra with linear involution $ * $, such that for all $ x,y \in \mathbb{A}  $,
 \begin{center}
 	$ (xy)^{\ast} = y^{\ast}x^{\ast}$, and $ x^{\ast\ast} = x $.
 \end{center}
 
 We call an element $x\in\mathbb{A}$ a positive element, denote it by $x\succeq\theta$
 
 if $x\in\mathbb{A}_{h}=\lbrace x\in \mathbb{A}: x= x^{\ast}\rbrace $ and  $\sigma(x) \subset \mathbb{R}_{+}$,where  $\sigma(x)$  is the spectrum of $ x $. Using positive element, we can define a partial ordering $\preceq$ on $\mathbb{A}_{h}$ as follows : 
 \begin{center}
 	$x\preceq y$  if and only if  $y-x\succeq\theta$
 \end{center}  where $\theta$ means the zero element in $\mathbb{A}$.
 
 \begin{definition}\label{d1} \cite{S} Let $\mathcal{U}$ be a non-empty set 
 	Suppose the mapping
 	$\ d:\mathcal{U}\times \mathcal{U}\rightarrow \mathbb{A}_{+}$ satisfies:
 	\begin{itemize}
 		\item[(i)]  $d(x,y)= 0$ if and only if $x=y$.
 		\item[(ii)]  $d(x,y)=d(y,x)$  for all distinct points $x,y\in \mathcal{U}$. 
 		\item[(iii)]  $d(x,y)\preceq d(x,u)+d(u,y)$  for all $ x, y,u \in \mathcal{U} $. 
 	\end{itemize}
 	Then $\left(\mathcal{U}, \mathbb{A},d\right)$ is called a $ C^{\ast}- $algebra-valued metric space.
 \end{definition}
\begin{lemma} \label{l1} Let $\{x_{n}\}$ be a sequence in a $ C^{\ast}- $algebra-valued metric space $\left(\mathcal{U}, \mathbb{A},d\right)$ such that 
	\begin{equation}
		d(x_{n},x_{n+1}) \preceq \delta d(x_{n-1},x_{n})
	\end{equation}
	for some $\delta \in[0,1[$ and $\forall n\in \mathbb{N}$. \\Then, $\{x_{n}\}$ is a Cauchy sequence in $\left(\mathcal{U}, \mathbb{A},d\right)$.
\end{lemma}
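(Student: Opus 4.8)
The plan is to turn the one-step contraction estimate into a geometric decay estimate, sum it with the triangle inequality, and then pass to norms using the order structure of the $C^{\ast}$-algebra. Throughout, the key elementary fact I will use is that for a nonnegative scalar $s$ and $a\succeq\theta$ one has $sa\succeq\theta$, and more generally $s\le t$ (scalars) together with $a\succeq\theta$ gives $sa\preceq ta$, since $(t-s)a$ is a nonnegative multiple of a positive element, hence positive.

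First I would iterate the hypothesis. Applying $d(x_{k},x_{k+1})\preceq\delta d(x_{k-1},x_{k})$ repeatedly and using that multiplication by the nonnegative scalar $\delta$ preserves $\preceq$ on $\mathbb{A}_{h}$, an easy induction on $n$ yields $d(x_{n},x_{n+1})\preceq\delta^{n}d(x_{0},x_{1})$ for all $n\in\mathbb{N}$.

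Next, for $m>n$ I would apply axiom (iii) of Definition \ref{d1} a total of $m-n$ times and then insert the decay estimate:
$$d(x_{n},x_{m})\preceq\sum_{k=n}^{m-1}d(x_{k},x_{k+1})\preceq\Big(\sum_{k=n}^{m-1}\delta^{k}\Big)d(x_{0},x_{1})\preceq\frac{\delta^{n}}{1-\delta}\,d(x_{0},x_{1}),$$
where the last inequality uses $\sum_{k=n}^{m-1}\delta^{k}\le\frac{\delta^{n}}{1-\delta}$ and the scalar-monotonicity remark above. Finally I would take norms: since $\theta\preceq d(x_{n},x_{m})\preceq\frac{\delta^{n}}{1-\delta}d(x_{0},x_{1})$ and the norm on a $C^{\ast}$-algebra is monotone on the positive cone, i.e. $\theta\preceq a\preceq b$ implies $\|a\|\le\|b\|$, we obtain $\|d(x_{n},x_{m})\|\le\frac{\delta^{n}}{1-\delta}\|d(x_{0},x_{1})\|$. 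Because $\delta\in[0,1[$, the right-hand side tends to $0$ as $n\to\infty$, uniformly in $m>n$, so $\{x_{n}\}$ is Cauchy in $\left(\mathcal{U},\mathbb{A},d\right)$.

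The only delicate point—everything else being a routine telescoping computation—is the monotonicity of the norm with respect to $\preceq$, which is the standard $C^{\ast}$-algebra fact that $\theta\preceq a\preceq b$ forces $\|a\|\le\|b\|$ (a consequence of the continuous functional calculus and the identity $\|a\|=\sup\sigma(a)$ for $a\succeq\theta$); I would state it explicitly and cite it rather than reprove it.
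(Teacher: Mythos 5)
Your proof is correct and follows essentially the same route as the paper's: iterate the hypothesis to get $d(x_{n},x_{n+1})\preceq\delta^{n}d(x_{0},x_{1})$, telescope with the triangle inequality, bound by the geometric series, and pass to norms to conclude that $\{x_{n}\}$ is Cauchy. The only difference is that you state explicitly the norm-monotonicity fact $\theta\preceq a\preceq b\Rightarrow\Vert a\Vert\le\Vert b\Vert$, which the paper uses silently when it takes norms of the ordered inequality.
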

\begin{proof} First we denote that 
	\begin{equation*}
		d(x_{n+1},x_{n})\preceq \delta^{n} d(x_{1},x_{0}), \; \forall n\in \mathbb{N}.
	\end{equation*}
	For $ m\geq 1 $ and $ p\geq 1 $
	\begin{align*}
		d( x_{m},x_{m+p})&\preceq b\left(  d( x_{m},x_{m+1})+d( x_{m+1},x_{m+p})\right)\\
		&\preceq   d( x_{m},x_{m+1})+d( x_{m+1},x_{m+2}+...+ d( x_{m+p-2},x_{m+p-1})\\&+ d( x_{m+p-1},x_{m+p})\\
		&\preceq \delta ^{m}d( x_{0},x_{1})+\delta ^{m+1}d( x_{0},x_{1})+\delta ^{m+2} d( x_{0},x_{1})+\delta ^{m+3}d( x_{0},x_{1})\\
		&+......+\delta ^{m+p-1}d( x_{0},x_{1})\\
		&=\delta^{m}+\delta^{m+1}+...+\delta^{m+p-1})d(x_{0},x_{1})\\
		&= \sum_{k=0}^{p-1}\delta^{m+k} d(x_{0},x_{1}).
	\end{align*}
	Since  $ \delta\in [0,1) $ ,  \\
	$\Vert d( x_{m},x_{m+p})\Vert \leq  \sum_{k=0}^{p-1}(\delta)^{m+k} \Vert d(x_{0},x_{1})\Vert \rightarrow 0 \;\text{quand}\; m\rightarrow \infty$.
	
	Then,
	\begin{equation*}
		\displaystyle\lim_{n,m\rightarrow \infty }d({x_n},{x_m})=\theta.
	\end{equation*}
	This implies that $ \{x_{n} \}_{n}$  is a Cauchy sequence.
	
\end{proof}
\section{Main result}
	inspired by the notion of interpolative Kannan in complete metric space, we present a new concepts of interpolative Kannan-Reich weak type contraction mappings.
we discuss the existence of fixed point results for these new contractive mappings in  $C^{\ast}$-algebra valued complete metric space.
\begin{definition}
	Let $(\mathcal{U},\mathcal{A},d)$ be a $ C^{\ast}- $algebra-valued metric space. A self-mapping $T:\mathcal{U}\rightarrow \mathcal{U}$ is called an interpolative Kannan type contraction if there exist $\tau, \beta \in]0,1[$ such that  
	\begin{equation}
		d(Tx,Ty)\preceq \tau d(x,Tx)^{\beta}d(y,Ty)^{1-\beta}; \forall x,y\in \mathcal{U}
	\end{equation} with $Tx\not=x$.
\end{definition}
\begin{theorem}\label{t1}
	Let $(\mathcal{U},\mathcal{A},d)$ be a complete $ C^{\ast}- $algebra-valued metric space and $T:\mathcal{U}\rightarrow \mathcal{U}$ an interpolative Kannan type contraction.\\  Then $T$ has a fixed point in $\mathcal{U}$.
\end{theorem}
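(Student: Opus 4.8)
The plan is to run the Picard iteration and show it converges to a fixed point. Fix $x_{0}\in\mathcal{U}$ and put $x_{n+1}=Tx_{n}$ for $n\geq 0$. If $x_{n_{0}}=x_{n_{0}+1}$ for some $n_{0}$, then $x_{n_{0}}$ is already a fixed point and there is nothing to prove; so assume $x_{n}\neq Tx_{n}=x_{n+1}$ for every $n$, which is precisely the condition ``$Tx\neq x$'' needed to apply the interpolative inequality at $x=x_{n-1}$, $y=x_{n}$. This gives
\begin{equation*}
 d(x_{n},x_{n+1})=d(Tx_{n-1},Tx_{n})\preceq \tau\, d(x_{n-1},x_{n})^{\beta}\, d(x_{n},x_{n+1})^{1-\beta}.
\end{equation*}

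The step I expect to require the most care is turning this into a genuine contraction estimate: in a noncommutative $C^{\ast}$-algebra the product $d(x_{n-1},x_{n})^{\beta}d(x_{n},x_{n+1})^{1-\beta}$ of two positive elements is in general neither positive nor self-adjoint, so the displayed $\preceq$ has to be used together with the norm. I would pass to norms, invoking that the $C^{\ast}$-norm is monotone on $\mathbb{A}_{+}$, is submultiplicative, and satisfies $\Vert a^{t}\Vert=\Vert a\Vert^{t}$ for $a\succeq\theta$ and $t>0$. This yields $\Vert d(x_{n},x_{n+1})\Vert\leq \tau\Vert d(x_{n-1},x_{n})\Vert^{\beta}\Vert d(x_{n},x_{n+1})\Vert^{1-\beta}$; dividing by $\Vert d(x_{n},x_{n+1})\Vert^{1-\beta}$ (legitimate, since this norm is strictly positive under our standing assumption $x_{n}\neq x_{n+1}$) and raising to the power $1/\beta$ produces $\Vert d(x_{n},x_{n+1})\Vert\leq\delta\,\Vert d(x_{n-1},x_{n})\Vert$ with $\delta:=\tau^{1/\beta}\in\,]0,1[$. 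Iterating, $\Vert d(x_{n},x_{n+1})\Vert\leq\delta^{n}\Vert d(x_{0},x_{1})\Vert\to 0$.

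Next I would show $\{x_{n}\}$ is Cauchy. Using the triangle inequality (iii) of Definition \ref{d1} together with monotonicity of the norm on positive elements, for $m\geq 1$ and $p\geq 1$,
\begin{equation*}
 \Vert d(x_{m},x_{m+p})\Vert\leq\sum_{k=0}^{p-1}\Vert d(x_{m+k},x_{m+k+1})\Vert\leq\Big(\sum_{k=0}^{p-1}\delta^{m+k}\Big)\Vert d(x_{0},x_{1})\Vert\leq\frac{\delta^{m}}{1-\delta}\Vert d(x_{0},x_{1})\Vert,
\end{equation*}
which tends to $0$ as $m\to\infty$; hence $\{x_{n}\}$ is a Cauchy sequence in $(\mathcal{U},\mathbb{A},d)$ (this is the content of Lemma \ref{l1}, once the one-step estimate is in hand), and by completeness $x_{n}\to u$ for some $u\in\mathcal{U}$.

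Finally I would identify $u$ as a fixed point. Applying the interpolative inequality with $x=x_{n}$ (allowed since $x_{n}\neq x_{n+1}$) and $y=u$ gives $d(x_{n+1},Tu)=d(Tx_{n},Tu)\preceq\tau\, d(x_{n},x_{n+1})^{\beta}d(u,Tu)^{1-\beta}$, whence $\Vert d(x_{n+1},Tu)\Vert\leq\tau\Vert d(x_{n},x_{n+1})\Vert^{\beta}\Vert d(u,Tu)\Vert^{1-\beta}\to 0$, so $x_{n+1}\to Tu$. Since also $x_{n+1}\to u$ and $d(u,Tu)\preceq d(u,x_{n+1})+d(x_{n+1},Tu)$ forces $\Vert d(u,Tu)\Vert=0$, i.e. $d(u,Tu)=\theta$, we conclude $Tu=u$. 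Note that only existence is claimed, which is appropriate, since interpolative Kannan contractions need not have a unique fixed point; hence no uniqueness argument is required.
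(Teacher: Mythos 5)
Your proof is correct and follows the same overall scheme as the paper's (Picard iteration, a one-step contraction estimate, geometric summation to get Cauchyness, completeness, identification of the limit as a fixed point), but at the two delicate points you take a genuinely different, and sounder, route. Where the paper cancels the factor $d(x_{n},x_{n+1})^{\beta}$ inside the partial order, passing from $d(x_{n},x_{n+1})\preceq \tau\, d(x_{n-1},x_{n})^{1-\beta}d(x_{n},x_{n+1})^{\beta}$ to $d(x_{n},x_{n+1})^{1-\beta}\preceq \tau\, d(x_{n-1},x_{n})^{1-\beta}$ --- a manipulation not justified for $\preceq$ in a noncommutative $C^{\ast}$-algebra, where a product of positive elements need not even be self-adjoint --- you pass to norms and use monotonicity of the norm on positive elements, submultiplicativity, and $\Vert a^{t}\Vert=\Vert a\Vert^{t}$, obtaining the scalar estimate $\Vert d(x_{n},x_{n+1})\Vert\leq \tau^{1/\beta}\Vert d(x_{n-1},x_{n})\Vert$, which is all the Cauchy argument needs. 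Likewise, at the last step the paper invokes ``continuity of $T$'', which is not among the hypotheses; your direct estimate $\Vert d(x_{n+1},Tu)\Vert\leq\tau\Vert d(x_{n},x_{n+1})\Vert^{\beta}\Vert d(u,Tu)\Vert^{1-\beta}\rightarrow 0$ combined with the triangle inequality removes that unjustified assumption (if one reads the definition as also requiring $y\neq Ty$, simply note that the case $Tu=u$ is already the conclusion). Finally, you rightly omit the uniqueness paragraph the paper appends: the statement claims only existence, and the paper's uniqueness computation applies the contractive inequality at fixed points, where the proviso $Tx\neq x$ fails. In short, the norm-based route buys rigor in the ordered $C^{\ast}$-algebra setting at essentially no extra cost, whereas the paper's order-theoretic shortcut rests on cancellations and a continuity assumption that are not available.
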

\begin{proof} Let $ x_{0} \in \mathcal{U} $ and define a sequence $ \{x_{n}\} \in \mathcal{U} $ by $x_{0}=x$ and $ x_{n+1}= Tx_{n} $, $ \forall n \in \mathbb{N} $. Suppose that $ x_{n}\not= Tx_{n} $, $ \forall n \in \mathbb{N} $ .\\
We have 

\begin{align*} d(x_{n+1},x_{n})=d(Tx_{n},Tx_{n-1}) &\preceq \tau d(x_{n},Tx_{n})^{\beta}d(x_{n-1},Tx_{n-1})^{1-\beta}\\
&= \tau d(x_{n-1},x_{n})^{1-\beta}d(x_{n},x_{n+1})^{\beta}\\
\end{align*}
this implies that \begin{equation}\label{e1}
	d(x_{n},x_{n+1})^{1-\beta}
	\preceq \tau d(x_{n-1},x_{n})^{1-\beta}
\end{equation}
Thus, we have that the sequence $\{d(x_{n}, x_{n-1})\}$ is non-increasing and non-negative. 
From \ref{e1},
we deduce that \begin{equation}\label{e2}
		d(x_{n},x_{n+1})
	\preceq \tau d(x_{n-1},x_{n})\preceq \tau^{n}d(x_{0},x_{1}).
\end{equation} 
Letting $n\rightarrow \infty$ in the inequality \ref{e2}, we obtain $\displaystyle\lim_{n\rightarrow\infty} d(x_{n},x_{n+1})=\theta$. \\
	For $ m \geq 1 $ and $ r \geq 1 $, it follows that 
\begin{align*}
	d(x_{m},x_{m+r}) & \preceq  d(x_{m},x_{m+1})+ d(x_{m+1},x_{m+2})
		+...+d(x_{m+r-1},x_{m+r})\\
	&\preceq  \tau^{m} d(x_{0},x_{1})+ ...+ \tau^{m+r-1}d(x_{0},x_{1})\\
	&\preceq \dfrac{\tau^{m}}{1-\tau}  d(x_{0},x_{1}) \rightarrow \theta \;\;as \;\;m\rightarrow \infty. \\
\end{align*}
Hence $ \{ x_{n}\} $ is a Cauchy sequence in $( \mathcal{U} ,\mathcal{A} ,d) $. Hence there exist $ z\in \mathcal{U} $ such that 
\begin{center}
	$\displaystyle\lim_{n\rightarrow \infty}  d(x_{n},z)= \theta $. 
\end{center}
Now, we shall show that $ z $ is fixed point of $ T $.
we get 
\begin{equation*}
d(Tx_{n},Tz) \preceq \tau d(x_{n}, Tx_{n})^{\beta} d(z,Tz)^{1-\beta}
\end{equation*}
Letting $ n \rightarrow \infty $ and using the concept of continuity of the function of $ T $.
\\We have $ d(z,Tz) = \theta$.\\ For uniqueness, Let $ y $ be another fixed point of $ T $  where:
 \begin{align*}
 d(z,y)&=	d(Tz,Ty)\\
 &\preceq \tau d(z,Tz)^{\beta}d(y,Ty)^{1-\beta}\\
 	&=0
 \end{align*}
 then $z=y$.
\end{proof}
\begin{example}
	Let $X=]2, \infty[$, $\mathcal{A}= \mathbb{R}^{2} $ and $d$ the metric defined on $X$ by $d: X\times X \rightarrow \mathbb{R}^{2}$ $d(x,y)= ((x+y)^{2}, 0)$.\\ Define a self-mapping $T$ by $Tx= \dfrac{1}{x}$. $T$ is a interpolative Kannan type contraction for $\tau=\dfrac{3}{4} $ and $\alpha= \dfrac{2}{5}$.
\end{example}
\begin{definition}
	Let $(\mathcal{U},\mathcal{A},d)$ be a $ C^{\ast}- $algebra-valued metric space. A self-mapping $T:\mathcal{U}\rightarrow \mathcal{U}$ is called $(\tau, \beta,\eta)$-interpolative Kannan contraction if $\tau, \beta,\;\eta \in]0,1[$ such that $\beta+\eta < 1$ and 
		\begin{equation}
		d(Tx,Ty)\preceq \tau d(x,Tx)^{\beta}d(y,Ty)^{\eta}; \forall x,y\in \mathcal{U}
	\end{equation} with $Tx\not=x$, $Ty\not=y$.
\end{definition}
\begin{theorem}
	Let $(\mathcal{U},\mathcal{A},d)$ be a complete $ C^{\ast}- $algebra-valued metric space and $T:\mathcal{U}\rightarrow \mathcal{U}$ be a $(\tau, \beta,\eta)$-interpolative Kannan contraction. 
 Then $T$ has a fixed point in $\mathcal{U}$.
\end{theorem}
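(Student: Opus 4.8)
The plan is to run the Picard iteration exactly as in the proof of Theorem \ref{t1}. Fix $x_{0}\in\mathcal{U}$ and set $x_{n+1}=Tx_{n}$. If $x_{N}=Tx_{N}$ for some $N\in\mathbb{N}$ we are already done, so assume $x_{n}\neq Tx_{n}$ for every $n$. Then both $x_{n}$ and $x_{n-1}$ are admissible in the defining inequality, and taking there $x=x_{n}$, $y=x_{n-1}$ gives
\begin{align*}
d(x_{n+1},x_{n})=d(Tx_{n},Tx_{n-1})
&\preceq \tau\, d(x_{n},Tx_{n})^{\beta}\, d(x_{n-1},Tx_{n-1})^{\eta}\\
&=\tau\, d(x_{n},x_{n+1})^{\beta}\, d(x_{n-1},x_{n})^{\eta}.
\end{align*}

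Next I would pass to norms. Using that $\theta\preceq a\preceq b$ implies $\Vert a\Vert\le\Vert b\Vert$, that the $C^{\ast}$-norm is submultiplicative, and that $\Vert a^{s}\Vert=\Vert a\Vert^{s}$ for $a\succeq\theta$ and $s>0$, and writing $t_{n}:=\Vert d(x_{n},x_{n+1})\Vert>0$, the estimate above yields
\[
t_{n}\le \tau\, t_{n}^{\beta}\,t_{n-1}^{\eta},\qquad\text{hence}\qquad t_{n}^{\,1-\beta}\le \tau\, t_{n-1}^{\,\eta}.
\]
Since $\beta+\eta<1$, i.e.\ $\eta<1-\beta$, one then has to convert this into a genuine geometric bound $t_{n}\le\lambda^{n}t_{0}$ with some $\lambda=\lambda(\tau,\beta,\eta)\in[0,1)$; this would give $d(x_{n},x_{n+1})\preceq\lambda^{n}d(x_{0},x_{1})\to\theta$ and then, by the telescoping estimate through the triangle inequality carried out in the proof of Theorem \ref{t1} (or by Lemma \ref{l1}), that $\{x_{n}\}$ is a Cauchy sequence.

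By completeness there is $z\in\mathcal{U}$ with $x_{n}\to z$. To show $Tz=z$, suppose instead $Tz\neq z$; then $x=x_{n}$, $y=z$ are admissible, so
\[
d(x_{n+1},Tz)=d(Tx_{n},Tz)\preceq \tau\, d(x_{n},x_{n+1})^{\beta}\,d(z,Tz)^{\eta},
\]
and taking norms, $\Vert d(x_{n+1},Tz)\Vert\le\tau\,t_{n}^{\beta}\,\Vert d(z,Tz)\Vert^{\eta}\to0$ because $t_{n}\to0$ and $\beta>0$, while $\bigl|\,\Vert d(x_{n+1},Tz)\Vert-\Vert d(z,Tz)\Vert\,\bigr|\le\Vert d(x_{n+1},z)\Vert\to0$ by the triangle inequality. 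Hence $\Vert d(z,Tz)\Vert=0$, i.e.\ $d(z,Tz)=\theta$ and $Tz=z$, contradicting $Tz\neq z$. Therefore $z$ is a fixed point of $T$; note that, the theorem asserting only existence, no uniqueness step is required (and, unlike in Theorem \ref{t1}, the relaxation $\beta+\eta<1$ need not force uniqueness).

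I expect the real work to lie in the middle step: extracting a contraction from $t_{n}^{\,1-\beta}\le\tau\,t_{n-1}^{\,\eta}$. In Theorem \ref{t1} the two exponents coincide, so one simply takes $(1-\beta)$-th roots and lands on $t_{n}\le\tau^{1/(1-\beta)}t_{n-1}$; here $1-\beta\neq\eta$, so no root trick produces a contraction factor directly, and the reduction must use $\beta+\eta<1$ in an essential way on the \emph{iterated} inequality. Handling that recursion carefully --- and, along the way, making sure the products and fractional powers of positive elements of $\mathbb{A}$ used above behave as claimed, which is exactly why it pays to move to norms at the first opportunity --- is the delicate point of the proof.
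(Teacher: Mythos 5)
You follow the same route as the paper (Picard iteration, a recursive inequality between consecutive distances, then the Cauchy and limit argument of Theorem \ref{t1}), but the step you explicitly defer --- converting $t_{n}^{\,1-\beta}\le \tau\,t_{n-1}^{\,\eta}$ into a geometric bound $t_{n}\le\lambda^{n}t_{0}$ --- is not a technical detail: it is exactly where the argument fails, and it cannot be repaired. Rewrite your inequality as $t_{n}\le \tau^{1/(1-\beta)}\,t_{n-1}^{\,\rho}$ with $\rho:=\eta/(1-\beta)\in(0,1)$. The map $u\mapsto \tau^{1/(1-\beta)}u^{\rho}$ is increasing and has the attracting positive fixed point $\tau^{1/(1-\beta-\eta)}$, so iterating only yields $\limsup_{n}t_{n}\le \tau^{1/(1-\beta-\eta)}$, not $t_{n}\to0$; when $t_{n-1}<1$ the exponent $\rho<1$ works against you, since $t_{n-1}^{\rho}>t_{n-1}$. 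Using the symmetric choice $x=x_{n-1}$, $y=x_{n}$ (which gives $t_{n}^{\,1-\eta}\le\tau t_{n-1}^{\,\beta}$), or multiplying the two inequalities, changes nothing: the exponent on the right stays strictly below the one on the left. So you cannot conclude $d(x_{n},x_{n+1})\to\theta$, let alone that $\{x_{n}\}$ is Cauchy, and the rest of your outline never gets started.

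In fact the statement as given (with $\beta+\eta<1$ strict) is false, which is why no completion of your middle step exists: take $\mathbb{A}=\mathbb{C}$, $\mathcal{U}=\{a,b\}$ with $d(a,b)=\delta$ where $0<\delta\le\tau^{1/(1-\beta-\eta)}$, and let $T$ swap $a$ and $b$. Then $d(Ta,Tb)=\delta\le\tau\delta^{\beta+\eta}=\tau\,d(a,Ta)^{\beta}d(b,Tb)^{\eta}$, so $T$ is a $(\tau,\beta,\eta)$-interpolative Kannan contraction on a complete space, yet it has no fixed point. The paper's own proof hides this at the line $\tau d(x_{n-1},x_{n})^{\beta}\preceq \tau d(x_{n-1},x_{n})^{1-\eta}$ ``since $\beta\le1-\eta$'', which for a positive element is valid only when $d(x_{n-1},x_{n})\succeq I$ (for $a\preceq I$ the inequality reverses), after which it defers to Theorem \ref{t1}; so the difficulty you flagged is real and is fatal without an additional hypothesis (for instance $\beta+\eta=1$, which is Theorem \ref{t1} itself, or a lower bound keeping the relevant distances $\succeq I$). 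Your concluding step identifying the limit as a fixed point (passing to norms and using the triangle inequality instead of the paper's unexplained appeal to continuity of $T$) is fine and indeed cleaner than the paper's, but it is moot because the Cauchy step cannot be established.
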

\begin{proof}Let $ x_{0} \in \mathcal{U} $ and define a sequence $ \{x_{n}\} \in \mathcal{U} $ by $x_{0}=x$ and $ x_{n+1}= Tx_{n} $, $ \forall n \in \mathbb{N} $. Suppose that $ x_{n}\not= Tx_{n} $, $ \forall n \in \mathbb{N} $ .\\
	We have 
	$$ d(x_{n},x_{n+1})=d(Tx_{n-1},Tx_{n}) \preceq \tau d(x_{n-1},x_{n})^{\beta}d(x_{n},x_{n+1})^{\eta}$$
	ie.
	$$d(x_{n},x_{n+1})^{1-\eta}\preceq \tau d(x_{n-1},x_{n})^{\beta}\preceq \tau d(x_{n-1},x_{n})^{1-\eta}$$
	since $\beta\leq 1-\eta$.\\ Similary to the proof of the Theorem \ref{t1} we obtain the result.
	
\end{proof}

\begin{definition}	Let $(\mathcal{U},\mathcal{A},d)$ be a $ C^{\ast}- $algebra-valued metric space and  $T, S:\mathcal{U}\rightarrow \mathcal{U}$ be  two self-maps. We call $(T,S)$ a $(\tau, \beta,\eta)$-interpolative Kannan contraction pair, if there exist $\tau\in[0,1[,\; 0<  \beta, \eta< 1$ with $\beta+\eta<1$ such that 
	\begin{equation}
		d(Tx,Sy) \preceq \tau d(x,Tx)^{\beta} d(y,Sy)^{\eta}
	\end{equation}
	for all $x,y\in \mathcal{U}$ with $x\not= Tx$, $y\not= Sy$.
\end{definition}
\begin{theorem}	Let $(\mathcal{U},\mathcal{A},d)$ be a complete $ C^{\ast}- $algebra-valued metric space and $(T,S)$ be a $(\tau, \beta,\eta)$-interpolative Kannan contraction pair. Then $T$ and $S$ have a unique common fixed point in $\mathcal{U}$. i.e., there exists $z \in\mathcal{U}$ such that $Tz=z= Sz$.
	
\end{theorem}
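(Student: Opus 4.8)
The plan is to imitate the Picard-iteration argument of Theorem \ref{t1}, but run along an \emph{alternating} orbit. Fix $x_0\in\mathcal{U}$ and define $\{x_n\}$ by $x_{2n+1}=Tx_{2n}$ and $x_{2n+2}=Sx_{2n+1}$ for all $n\in\mathbb{N}$. We may assume that no two consecutive iterates coincide (if $x_{n_0}=x_{n_0+1}$ for some $n_0$, then $x_{n_0}$ is a fixed point of $T$ or of $S$, and a short additional check shows it is in fact common); in particular $x_{2n}\ne Tx_{2n}$ and $x_{2n+1}\ne Sx_{2n+1}$ for every $n$, so the pair inequality is available at each step. Applying it with $(x,y)=(x_{2n},x_{2n+1})$ gives
\[
d(x_{2n+1},x_{2n+2})=d(Tx_{2n},Sx_{2n+1})\preceq \tau\, d(x_{2n},x_{2n+1})^{\beta}\, d(x_{2n+1},x_{2n+2})^{\eta},
\]
and, exactly as in the passage from \eqref{e1} to \eqref{e2} in the proof of Theorem \ref{t1}, cancelling $d(x_{2n+1},x_{2n+2})^{\eta}$ and using $\beta\le 1-\eta$ yields $d(x_{2n+1},x_{2n+2})\preceq\tau\, d(x_{2n},x_{2n+1})$. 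Symmetrically, writing $d(x_{2n},x_{2n+1})=d(Tx_{2n},Sx_{2n-1})$ and applying the hypothesis with $(x,y)=(x_{2n},x_{2n-1})$, then using $\eta\le 1-\beta$, gives $d(x_{2n},x_{2n+1})\preceq\tau\, d(x_{2n-1},x_{2n})$. Hence $d(x_{n+1},x_n)\preceq\tau\, d(x_n,x_{n-1})$ for all $n\ge 1$.

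By Lemma \ref{l1} the sequence $\{x_n\}$ is Cauchy, and completeness of $(\mathcal{U},\mathcal{A},d)$ produces $z\in\mathcal{U}$ with $x_n\to z$. To see $Tz=z$, suppose not; applying the hypothesis with $(x,y)=(z,x_{2n+1})$ gives
\[
d(Tz,x_{2n+2})=d(Tz,Sx_{2n+1})\preceq\tau\, d(z,Tz)^{\beta}\, d(x_{2n+1},x_{2n+2})^{\eta}\longrightarrow\theta,
\]
since $d(x_{2n+1},x_{2n+2})\to\theta$; thus $x_{2n+2}\to Tz$, and uniqueness of limits forces $Tz=z$, a contradiction. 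The symmetric choice $(x,y)=(x_{2n},z)$ gives $d(x_{2n+1},Sz)=d(Tx_{2n},Sz)\preceq\tau\, d(x_{2n},x_{2n+1})^{\beta}\, d(z,Sz)^{\eta}\to\theta$, hence $x_{2n+1}\to Sz$ and $Sz=z$; so $z$ is a common fixed point. For uniqueness, if $w$ is another common fixed point, then $d(z,w)=d(Tz,Sw)\preceq\tau\, d(z,Tz)^{\beta}\, d(w,Sw)^{\eta}=\theta$, so $z=w$.

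The telescoping estimate and the appeal to Lemma \ref{l1} are routine. I expect the genuine difficulty to be the two cancellation/root-extraction moves — passing from $d(x_{2n+1},x_{2n+2})\preceq\tau\, d(x_{2n},x_{2n+1})^{\beta}d(x_{2n+1},x_{2n+2})^{\eta}$ to $d(x_{2n+1},x_{2n+2})^{1-\eta}\preceq\tau\, d(x_{2n},x_{2n+1})^{\beta}$ and then to $d(x_{2n+1},x_{2n+2})\preceq\tau\, d(x_{2n},x_{2n+1})$ — which in the $C^{\ast}$-algebra setting rely on the order behaviour of the continuous functional calculus on the commutative $C^{\ast}$-subalgebra generated by a single positive element (monotonicity of $t\mapsto t^{s}$ for $0\le s\le 1$), and, for the comparison $d(\cdot,\cdot)^{\beta}\preceq d(\cdot,\cdot)^{1-\eta}$, on the consecutive distances being eventually $\preceq I$; this is treated in the same spirit as in the proof of Theorem \ref{t1}. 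The other points needing care are uniformising the argument over both the $T$-then-$S$ and the $S$-then-$T$ steps, and disposing of the degenerate case in which some $x_{n_0}=x_{n_0+1}$.
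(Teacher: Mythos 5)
Your proposal follows essentially the same route as the paper's own proof: the alternating orbit $x_{2n+1}=Tx_{2n}$, $x_{2n+2}=Sx_{2n+1}$, the exponent cancellation giving $d(x_{n+1},x_{n})\preceq\tau\,d(x_{n},x_{n-1})$, Cauchyness by telescoping (the paper rederives what your appeal to Lemma \ref{l1} gives), and the identical limiting arguments for $Tz=z$, $Sz=z$ and for uniqueness. One caveat, which affects the paper's write-up just as much as yours: the comparison $d^{\beta}\preceq d^{1-\eta}$ with $\beta<1-\eta$ holds for positive elements dominating $I$, not for those $\preceq I$ as you state, so this step is glossed over to the same extent in both arguments and does not distinguish your proof from the paper's.
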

\begin{proof} Let $ x_{0} \in \mathcal{U} $ and  we define a sequence $ \{x_{n}\} \in \mathcal{U} $ by\\
	 $Tx_{2n+1}=x_{2n+2}$ and $ Sx_{2n}= x_{2n+1} $, $ \forall n \in \mathbb{N} $.
\begin{align*}
	d(x_{2n+1},x_{2n+2}) &\preceq \tau d(x_{2n},x_{2n+1})^{\beta}d(x_{2n},x_{2n+1})^{\eta}\\
	&\preceq \tau d(x_{2n},x_{2n+1})^{\beta}d(x_{2n+1},x_{2n+2})^{1-\beta}	
\end{align*}
Thus, 
\begin{equation*}
	d(x_{2n+1},x_{2n+2})^{\beta}\preceq \tau d(x_{2n},x_{2n+1})^{\beta}
\end{equation*}
	then, \begin{align*}
		d(x_{2n+1},x_{2n+2})&\preceq \tau^{\frac{1}{1-\beta}} d(x_{2n},x_{2n+1})\\
		&\preceq \tau d(x_{2n},x_{2n+1}).
	\end{align*}
	Hence, 
	\begin{align*}
		d(x_{2n+1},x_{2n+2}) 
		&\preceq \tau d(x_{2n},x_{2n+1})\\
		&\preceq \tau^{2} d(x_{2n-1},x_{2n})\\
		&\preceq .\\
		&\preceq.\\
		&\preceq \tau^{2n+1}d(x_{0},x_{1}).
	\end{align*}
	Similarly, \begin{align*}
		d(x_{2n+1},x_{2n})&= d(Tx_{2n}, Sx_{2n-1})\\
		&\preceq \tau d(x_{2n},x_{2n+1})^{\beta}d(x_{2n-1},x_{2n})^{1-\beta}\\
		\end{align*}
	we have
	\begin{equation*}
		 d(x_{2n+1},x_{2n})^{1-\beta} \preceq \tau d(x_{2n-1},x_{2n})^{1-\beta}
	\end{equation*}
	then, \begin{align*}
		 d(x_{2n+1},x_{2n}) 	&\preceq \tau^{\frac{1}{1-\beta}} d(x_{2n-1},x_{2n})\\
		 	&\preceq \tau d(x_{2n-1},x_{2n}).
	\end{align*}
	Hence, 	 \begin{align}\label{e2}
		d(x_{2n+1},x_{2n+2})&\preceq \tau^{\frac{1}{1-\beta}} d(x_{2n},x_{2n+1})\\
		&\preceq \tau d(x_{2n},x_{2n+1}).
	\end{align}
	And
	\begin{align*}
		d(x_{2n+1},x_{2n}) 
		&\preceq \tau d(x_{2n-1},x_{2n})\\
		&\preceq \tau^{2} d(x_{2n-2},x_{2n-1})\\
		&\preceq .\\
		&\preceq.\\
		&\preceq \tau^{2n}d(x_{0},x_{1}).
	\end{align*}
So \begin{equation}\label{e3}
	d(x_{2n+1},x_{2n})\preceq \tau^{2n} d(x_{0},x_{1}).
\end{equation}
From \ref{e2} and \ref{e3}, we deduce that 
\begin{equation}\label{e4}
	d(x_{n},x_{n+1})\preceq \tau^{n} d(x_{0},x_{1}).
\end{equation}
	Using \ref{e4} we prove that $\{x_{n}\}$ is a Cauchy sequence.\\
		For $ m \geq 1 $ and $ r \geq 1 $, it follows that 
	\begin{align*}
		d(x_{m},x_{m+r}) & \preceq  d(x_{m},x_{m+1})+ d(x_{m+1},x_{m+2})
		+...+d(x_{m+r-1},x_{m+r})\\
		&\preceq  \tau^{m} d(x_{0},x_{1})+ ...+ \tau^{m+r-1}d(x_{0},x_{1})\\
		&\preceq \dfrac{\tau^{m}}{1-\tau}  d(x_{0},x_{1}) \rightarrow \theta \;\;as \;\;m\rightarrow \infty.\\
	\end{align*} 
	Hence $ \{ x_{n}\} $ is a Cauchy sequence in $( \mathcal{U} ,\mathcal{A} ,d) $. Hence there exist $ z\in \mathcal{U} $ such that 
	\begin{center}
		$\displaystyle\lim_{n\rightarrow \infty}  d(x_{n},z)= \theta $ 
	\end{center}
	Now, we shall show that $ z $ is fixed point of $ T $.
	we get 
	\begin{align*}
		d(Tz,x_{2n+2}) &= 	d(Tz,Sx_{2n+1})\\&\preceq \tau d(z, Tz)^{\beta} d(x_{2n+1},x_{2n+2})^{1-\beta}
	\end{align*}
	Letting $ n \rightarrow \infty $ and using the continuity of the function of $ T $.
	\\We have $ d(z,Tz) = \theta$. Similarly,
	
	\begin{align*}
		d(x_{2n+2},Sz) &= 	d(Tx_{2n},Sz)\\&\preceq \tau  d(x_{2n},x_{2n+1})^{\beta}d(z, Sz)^{1-\beta}.
	\end{align*}
	Letting $n\rightarrow \infty$ we obtain $Sz=z$.
	\\ For uniqueness, Let $ y $ be another common fixed point of $ T $ and $S$, then  
	\begin{align*}
		d(z,y)&=	d(Tz,Sy)\\
		&\preceq \tau d(z,Tz)^{\beta}d(y,Sy)^{1-\beta}\\
		&=0
	\end{align*}
	then $z=y$.
\end{proof}

\begin{definition}Let $(\mathcal{U},\mathcal{A},d)$ be a $ C^{\ast}- $algebra-valued metric space and  $T, R:\mathcal{U}\rightarrow \mathcal{U}$ be  two self-maps. We call $T$ is an $R$-interpolative Kannan type contraction , if there exist $\tau\in[0,1[ \:\text{and} \;0<  \beta< 1$  such that 
	\begin{equation}
		d(Tx,Ty) \preceq \tau d(Rx,Tx)^{\beta} d(Ry,Ty)^{1-\beta}
	\end{equation}
$ 	\forall x,y\in \mathcal{U}$ with $x\not= Tx$, $y\not= Ty$.
	
\end{definition}
\begin{theorem}Let $(\mathcal{U},\mathcal{A},d)$ be a complete $ C^{\ast}- $algebra-valued metric space and  $T :\mathcal{U}\rightarrow \mathcal{U}$ is a $R$-interpolative Kannan contraction. Assume that $T\mathcal{U} \subset R\mathcal{U}$ and  $R\mathcal{U}$ is closed. If there exist $\tau\in[0,1[ \;\text{and}\; 0<  \beta< 1$ such that 
	\begin{equation}
	d(Tx,Ty) \preceq \tau d(Rx,Tx)^{\beta} d(Ry,Ty)^{1-\beta}
\end{equation}
$ 	\forall x,y\in \mathcal{U}$ with $x\not= Tx$, $y\not= Ty$.
	Then, $T$ and $R$ have a unique common fixed point in $\mathcal{U}$.
	
\end{theorem}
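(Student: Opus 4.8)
The plan is to imitate the Jungck construction: replace the pair $(T,R)$ by a single sequence living in $R\mathcal{U}$ and then reduce everything to Lemma \ref{l1} and the scheme of Theorem \ref{t1}. Since $T\mathcal{U}\subset R\mathcal{U}$, fix $x_{0}\in\mathcal{U}$ and choose inductively $x_{n+1}\in\mathcal{U}$ with $Rx_{n+1}=Tx_{n}$; put $y_{n}:=Rx_{n}$, so that $y_{n+1}=Tx_{n}$ for all $n\in\mathbb{N}$. As in the earlier proofs we assume $x_{n}\neq Tx_{n}$ (and, at the relevant places, $u\neq Tu$, $z\neq Tz$), the excluded cases producing a coincidence or fixed point directly. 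Applying the $R$-interpolative Kannan inequality to $(x_{n},x_{n+1})$ gives
\begin{equation*}
d(y_{n+1},y_{n+2})=d(Tx_{n},Tx_{n+1})\preceq \tau\, d(Rx_{n},Tx_{n})^{\beta}\,d(Rx_{n+1},Tx_{n+1})^{1-\beta}=\tau\, d(y_{n},y_{n+1})^{\beta}\,d(y_{n+1},y_{n+2})^{1-\beta}.
\end{equation*}

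Cancelling the common factor $d(y_{n+1},y_{n+2})^{1-\beta}$ and then taking the $1/\beta$-th power — the same manipulation used in the proof of Theorem \ref{t1} — yields $d(y_{n+1},y_{n+2})\preceq \tau^{1/\beta}d(y_{n},y_{n+1})\preceq \tau\, d(y_{n},y_{n+1})$, since $\tau\in[0,1[$ and $1/\beta\geq 1$. Hence $d(y_{n},y_{n+1})\preceq \tau^{n}d(y_{0},y_{1})$ for all $n$, so Lemma \ref{l1} shows that $\{y_{n}\}$ is a Cauchy sequence; by completeness $y_{n}\to z$ for some $z\in\mathcal{U}$, and since $y_{n}=Rx_{n}\in R\mathcal{U}$ and $R\mathcal{U}$ is closed, there is $u\in\mathcal{U}$ with $Ru=z$.

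Next I would check that $u$ is a coincidence point. Applying the inequality to $(u,x_{n})$,
\begin{equation*}
d(Tu,y_{n+1})=d(Tu,Tx_{n})\preceq \tau\, d(Ru,Tu)^{\beta}\,d(Rx_{n},Tx_{n})^{1-\beta}=\tau\, d(z,Tu)^{\beta}\,d(y_{n},y_{n+1})^{1-\beta},
\end{equation*}
and since $\Vert d(y_{n},y_{n+1})\Vert\to 0$ while $d(z,Tu)^{\beta}$ is fixed, the norm of the right-hand side tends to $0$; combined with $y_{n+1}\to z$ and the triangle inequality this forces $d(Tu,z)=\theta$, i.e. $Tu=Ru=z$. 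Feeding $(z,u)$ into the inequality and using $d(Ru,Tu)=d(z,z)=\theta$ gives $d(Tz,Tu)\preceq \tau\, d(Rz,Tz)^{\beta}\,\theta^{1-\beta}=\theta$, hence $Tz=Tu=z$. To pass from this coincidence point to a genuine common fixed point I would use that $T$ and $R$ commute at $u$ (weak compatibility of the pair): then $Tz=T(Ru)=R(Tu)=Rz$, whence $Rz=Tz=z$ and $z$ is a common fixed point of $T$ and $R$.

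Uniqueness follows at once: if $z'$ is another common fixed point, then $d(z,z')=d(Tz,Tz')\preceq \tau\, d(Rz,Tz)^{\beta}\,d(Rz',Tz')^{1-\beta}=\tau\,\theta^{\beta}\theta^{1-\beta}=\theta$, so $z=z'$. The step I expect to be the genuine obstacle is the last one, namely deducing $Rz=z$: the inequality controls only $T$ at the coincidence value, so a compatibility-type assumption on $(T,R)$ is really needed and should be added to the hypotheses. A secondary point of care, already tacit in the earlier proofs, is that the cancellations and the map $t\mapsto t^{1/\beta}$ (not operator monotone) are applied to positive elements of $\mathbb{A}$ that need not commute; one should justify this, for instance by restricting to the abelian $C^{\ast}$-subalgebra generated by the distances involved or by working throughout with norms.
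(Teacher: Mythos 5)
Your core construction coincides with the paper's: the Jungck-type sequence $Rx_{n+1}=Tx_{n}$, the estimate $d(Tx_{n+1},Tx_{n})\preceq\tau^{c}\,d(Tx_{n},Tx_{n-1})\preceq\tau\,d(Tx_{n},Tx_{n-1})$ (the paper gets $c=\tfrac{1}{1-\beta}$ by inserting the pair $(x_{n+1},x_{n})$ where you get $c=\tfrac{1}{\beta}$ from $(x_{n},x_{n+1})$ --- immaterial), then Lemma \ref{l1}, the closedness of $R\mathcal{U}$ to produce $v$ with $Rv=z$, and the passage to the limit in $d(Tx_{n},Tv)\preceq\tau\,d(Rx_{n},Tx_{n})^{\beta}d(Rv,Tv)^{1-\beta}$ yielding $Tv=Rv=z$. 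Where you go beyond the paper is the instructive part: the paper \emph{stops} at this coincidence point and declares the theorem proved, whereas you correctly note that nothing so far shows that $z$ itself is fixed by $T$ and $R$, you push on to $Tz=z$ by feeding $(z,v)$ into the inequality, and you identify that the remaining step $Rz=z$ requires a weak-compatibility hypothesis (commutation of $T$ and $R$ at coincidence points) that is absent from the statement. That is a genuine gap in the paper's own proof, not in yours: as written, the paper establishes only a point of coincidence, and neither the common-fixed-point claim nor the uniqueness claim in the statement is addressed. Your proposed remedy (adding weak compatibility of the pair $(T,R)$) is the standard one and does close that step.

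Two caveats on your own text. First, your uniqueness argument (like the uniqueness arguments the paper gives for its other theorems) applies the contractive inequality at points with $z=Tz$ and $z'=Tz'$, which the hypothesis explicitly excludes ($x\neq Tx$, $y\neq Ty$); strictly speaking uniqueness does not follow from the stated condition --- this is a known feature of interpolative contractions, whose fixed points need not be unique. Second, your closing reservation is well founded: cancelling the factor $d(y_{n+1},y_{n+2})^{1-\beta}$, raising to the power $\tfrac{1}{\beta}$, and comparing products of possibly non-commuting positive elements are not justified operations for the partial order of a general $C^{\ast}$-algebra (the products need not even be self-adjoint, and $t\mapsto t^{1/\beta}$ is not operator monotone). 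The paper performs exactly the same manipulations silently, so on this point your argument is no weaker than the published one, but a rigorous version should either restrict to a commutative $C^{\ast}$-subalgebra containing the relevant distances or run the estimates on norms.
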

\begin{proof}Let $ x_{0} \in \mathcal{U} $ and we define a sequence $ \{x_{n}\} \in \mathcal{U} $ by\\
	$Rx_{n+1}=Tx_{n}$ $ \forall n \in \mathbb{N} $.
	\begin{align*}
		d(Tx_{n+1},Tx_{n}) &\preceq \tau d(Rx_{n+1},Tx_{n+1})^{\beta}d(Rx_{n},Tx_{n})^{1-\beta}\\
		&= \tau d(Tx_{n},Tx_{n+1})^{\beta}d(Tx_{n-1},Tx_{n})^{1-\beta}	
	\end{align*}
	Then, \begin{equation*}
		d(Tx_{n+1},Tx_{n})\preceq \tau^{\frac{1}{1-\beta}} d(Tx_{n},Tx_{n-1})\preceq \tau d(Tx_{n},Tx_{n-1}).
	\end{equation*}
	Lemma \ref{l1} implies that $\{Tx_{n}\}$ is a Cauchy sequence and consequently $\{Rx_{n}\}$ is also a Cauchy sequence.
	
	Let $z\in\mathcal{U}$ such that $\displaystyle\lim_{n\rightarrow\infty} d(Tx_{n}, z)=\displaystyle\lim_{n\rightarrow\infty} d(Rx_{n+1}, z)= \theta$. 
	Since $z\in R\mathcal{U}$, there exists $ v\in \mathcal{U} $ such that $z=Rv$.  \\ We obtain 
	\begin{equation*}
		d(Tx_{n},Tz)\preceq \tau d(Rx_{n},Tx_{n})^{\beta} d(Rv,Tv)^{1-\beta}.
	\end{equation*}
	Letting $n\rightarrow \infty$ we obtain $z= Rv= Tv$.
\end{proof}
\begin{example}	Let $X=]2, \infty[$, $\mathcal{A}= \mathbb{R}^{2} $ and $d$ the metric defined on $X$ by $d: X\times X \rightarrow \mathbb{R}^{2}$ $d(x,y)= ((x+y)^{2}, 0)$.\\ Define two self-mapping $T$ and $R$ by $Tx= \dfrac{1}{x}$ and $R(x)= x^{2}$. $T$ is a $R-$interpolative Kannan contraction for $\tau=\dfrac{3}{4} $ and $\alpha= \dfrac{2}{5}$.

\end{example}
 \begin{theorem}Let $(\mathcal{U},\mathcal{A},d)$ be a complete $ C^{\ast}- $algebra-valued metric space and  $T :\mathcal{U}\rightarrow \mathcal{U}$
  satisfy the following condition:
  \begin{equation}\label{e12}
  	d(Tx,Ty) \preceq \tau d(x,y)^{\alpha} d(x,Ty)^{\beta} d(y,Ty)^{\eta}
  \end{equation}
  	$\forall x,y \in\mathcal{U}$ with $x\not= Tx$, $y\not=Ty $, where $\tau \in ]0,1[$ and $\alpha,\beta,\eta \in ]0,1[$ such that $\alpha+\beta+\eta >1$. If there exists $x_{0}\in \mathcal{U}$ such that $d(x_{0},Tx_{0}) \preceq I$ 
  	, then $T$ has a fixed point in $\mathcal{U}$.
 \end{theorem}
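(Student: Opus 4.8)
The plan is to run the Picard iteration from the distinguished point and then reduce everything to Lemma~\ref{l1}. Fix $x_{0}\in\mathcal{U}$ with $d(x_{0},Tx_{0})\preceq I$ and set $x_{n+1}=Tx_{n}$ for all $n\in\mathbb{N}$. If $x_{n_{0}}=Tx_{n_{0}}$ for some $n_{0}$, we are already done; so assume throughout that $x_{n}\neq Tx_{n}$ for every $n$, which guarantees $d(x_{n},x_{n+1})\neq\theta$ and makes the contractive hypothesis~\ref{e12} applicable at every pair of consecutive iterates.

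First I would extract a one-step estimate. Applying~\ref{e12} with $x=x_{n-1}$ and $y=x_{n}$ gives
\begin{equation*}
	d(x_{n},x_{n+1})\preceq \tau\, d(x_{n-1},x_{n})^{\alpha}\, d(x_{n-1},x_{n+1})^{\beta}\, d(x_{n},x_{n+1})^{\eta},
\end{equation*}
and I would bound the middle factor with the triangle inequality $d(x_{n-1},x_{n+1})\preceq d(x_{n-1},x_{n})+d(x_{n},x_{n+1})$ together with the classical inequality that $a\preceq b$ forces $a^{s}\preceq b^{s}$ for $s\in\,]0,1]$. The next step is an induction showing $d(x_{n},x_{n+1})\preceq I$ for every $n$: the base case is the hypothesis on $x_{0}$, and the inductive step feeds $d(x_{n-1},x_{n})\preceq I$ into the displayed inequality, whence also $d(x_{n-1},x_{n+1})\preceq 2I$. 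Once all the distances in sight are dominated by a multiple of $I$, the assumption $\alpha+\beta+\eta>1$ lets me lower the exponent sum to $1$ (for $\theta\preceq a\preceq I$ one has $a^{s}\preceq a^{s'}$ when $s\ge s'$), and after absorbing the numerical constant from the triangle term I obtain
\begin{equation*}
	d(x_{n},x_{n+1})\preceq \delta\, d(x_{n-1},x_{n})
\end{equation*}
for a fixed $\delta\in[0,1[$, uniformly in $n$.

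With this recursion in hand, Lemma~\ref{l1} yields that $\{x_{n}\}$ is a Cauchy sequence, so completeness produces $z\in\mathcal{U}$ with $\lim_{n\to\infty}d(x_{n},z)=\theta$. To see that $z$ is a fixed point, I would apply~\ref{e12} with $x=x_{n}$ and $y=z$ (assuming $z\neq Tz$, else we are done) to get
\begin{equation*}
	d(x_{n+1},Tz)\preceq \tau\, d(x_{n},x_{n+1})^{\alpha}\, d(x_{n},Tz)^{\beta}\, d(z,Tz)^{\eta},
\end{equation*}
and then let $n\to\infty$, using $d(x_{n},x_{n+1})\to\theta$ and the continuity of $T$ to identify the limit on the left with $d(z,Tz)$; this forces $d(z,Tz)=\theta$, i.e. $Tz=z$.

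The step I expect to be the genuine obstacle is the handling of fractional powers and, above all, of products of positive elements: in a general noncommutative $C^{\ast}$-algebra the three factors need not commute, a product of positive elements need not be positive, and one cannot multiply two $\preceq$-inequalities termwise, so converting the raw bound into the clean geometric recursion $d(x_{n},x_{n+1})\preceq\delta\,d(x_{n-1},x_{n})$ --- including the cancellation of the trailing factor $d(x_{n},x_{n+1})^{\eta}$ --- requires care. This is handled either by restricting to a commutative $C^{\ast}$-algebra, as in the examples of the paper, or by a careful use of the monotonicity of $t\mapsto t^{s}$ for $s\in\,]0,1]$ together with the standing bound $d(x_{n},x_{n+1})\preceq I$; one must also keep separate track of the degenerate possibility that the iteration stalls at a fixed point, which is exactly the dichotomy recorded at the start.
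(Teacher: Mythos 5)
There is a genuine gap at the heart of your reduction to Lemma~\ref{l1}. Reading \ref{e12} literally, your one-step estimate is $d(x_{n},x_{n+1})\preceq \tau\, d(x_{n-1},x_{n})^{\alpha} d(x_{n-1},x_{n+1})^{\beta} d(x_{n},x_{n+1})^{\eta}$, and after bounding the middle factor by $d(x_{n-1},x_{n+1})^{\beta}\preceq 2^{\beta}I$ (granting your induction) and cancelling the trailing factor, the best you can get is $d(x_{n},x_{n+1})\preceq (\tau 2^{\beta})^{\frac{1}{1-\eta}} d(x_{n-1},x_{n})^{\frac{\alpha}{1-\eta}}$. To convert this into $d(x_{n},x_{n+1})\preceq \delta\, d(x_{n-1},x_{n})$ with a fixed $\delta\in[0,1[$ you need both $\alpha\geq 1-\eta$ (so that the exponent can be lowered to $1$ using $d(x_{n-1},x_{n})\preceq I$) and $\tau 2^{\beta}<1$; neither follows from the stated hypotheses $\alpha+\beta+\eta>1$ and $\tau\in\,]0,1[$ (take e.g. $\alpha=\eta=0.2$, $\beta=0.9$, $\tau=0.9$). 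The preliminary induction $d(x_{n},x_{n+1})\preceq I$ fails for the same reason: from the scalar shadow $a\leq \tau(1+a)^{\beta}a^{\eta}$ one cannot conclude $a\leq 1$ when $\tau 2^{\beta}\geq 1$. So the geometric recursion you feed into Lemma~\ref{l1} is not established, and the hypothesis $\alpha+\beta+\eta>1$ never actually gets used in a way that closes the argument.

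The source of the trouble is the middle factor. The paper's own proof never uses $d(x,Ty)$: in its first display it substitutes $d(x_{n-1},Tx_{n-1})^{\beta}=d(x_{n-1},x_{n})^{\beta}$, i.e.\ it treats the condition as the Reich-type bound $\tau\, d(x,y)^{\alpha}d(x,Tx)^{\beta}d(y,Ty)^{\eta}$ (consistent with the corollary at the end of the paper; the $Ty$ in \ref{e12} is best read as a misprint). With that reading the factors telescope: $d(x_{n},x_{n+1})\preceq \tau^{\frac{1}{1-\eta}} d(x_{n-1},x_{n})^{\frac{\alpha+\beta}{1-\eta}}$, the hypothesis $\alpha+\beta+\eta>1$ makes the exponent $\frac{\alpha+\beta}{1-\eta}>1$, and iterating together with $d(x_{0},Tx_{0})\preceq I$ gives $d(x_{n},x_{n+1})\preceq \tau^{\frac{n}{1-\eta}}I$; Cauchyness then comes from summing this geometric series directly, with no triangle-inequality constant and no appeal to Lemma~\ref{l1}. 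Your literal use of $d(x_{n-1},x_{n+1})$ is exactly what introduces the factor $2^{\beta}$ and loses the exponent $\beta$, and that is where the proof breaks. Two minor points: in your final display the first factor should be $d(x_{n},z)^{\alpha}$ rather than $d(x_{n},x_{n+1})^{\alpha}$ (harmless, since both tend to $\theta$), and continuity of $T$ is neither assumed nor needed there --- the triangle inequality $d(z,Tz)\preceq d(z,x_{n+1})+d(x_{n+1},Tz)$ suffices. The noncommutativity and fractional-power caveats you raise are legitimate, but they afflict the paper's own argument equally and are not the decisive defect of your proposal.
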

 \begin{proof} Let $x_{0}\in \mathcal{U}$ and define a sequence $ \{x_{n}\} \in \mathcal{U} $ by $x_{0}=x$ and $ x_{n+1}= Tx_{n} $, $ \forall n \in \mathbb{N} $. Suppose that $ x_{n}\not= Tx_{n} $, $ \forall n \in \mathbb{N} $ .\\
 	We have 
 	
 	\begin{align*} d(x_{n},x_{n+1})=d(Tx_{n-1},Tx_{n}) &\preceq \tau  d(x_{n-1},x_{n})^{\alpha}d(x_{n-1},Tx_{n-1})^{\beta}d(x_{n},Tx_{n})^{\eta}\\
 		&= \tau d(x_{n-1},x_{n})^{\alpha}d(x_{n-1},x_{n})^{\beta}d(x_{n},x_{n+1})^{\eta}\\
 			&= \tau d(x_{n-1},x_{n})^{\alpha+\beta}d(x_{n},x_{n+1})^{\eta}.
 	\end{align*}
 	Therefore, \begin{align*} d(x_{n},x_{n+1}) &\preceq (\tau  d(x_{n-1},x_{n})^{\alpha+ \beta})^{\frac{1}{1-\eta}}\\
 		&= \tau^{\frac{1}{1-\eta}} d(x_{n-1},x_{n})^{\frac{\alpha+\beta}{1-\eta}}\\
 		&\preceq  \tau^{\frac{2}{1-\eta}} d(x_{n-2},x_{n-1})^{\frac{2(\alpha+\beta)}{1-\eta}}\\
 		&\preceq ...\\
 		&\preceq  \tau^{\frac{n}{1-\eta}} d(x_{0},x_{1})^{\frac{n(\alpha+\beta)}{1-\eta}}.
 		\end{align*}
 		Then, \begin{equation}
 			d(x_{n},x_{n+1})\preceq \tau^{\frac{n}{1-\eta}}.
 		\end{equation}
 	Since $\tau<1$ and $\frac{n}{1-\eta}>1 $, we have
 		\begin{equation}
 			\displaystyle\lim_{n\rightarrow\infty} d(x_{n},x_{n+1})=\theta.
 		\end{equation}
 	For  $ m \geq 1 $ and $ r \geq 1 $, it follows that 
 	\begin{align*}
 		d(x_{m},x_{m+r}) & \preceq  d(x_{m},x_{m+1})+ d(x_{m+1},x_{m+2})
 		+...+d(x_{m+r-1},x_{m+r})\\
 		&\preceq  \tau^{\frac{m}{1-\eta}}+ ...+ \tau^{\frac{m+r-1}{1-\eta}}\\
 		&\preceq \tau^{\frac{m}{1-\eta}}\dfrac{1+\tau}{1-\tau^{2}}   \rightarrow \theta \;\;as \;\;m\rightarrow \infty. \\
 	\end{align*} 
 	Hence $ \{ x_{n}\} $ is a Cauchy sequence in $( \mathcal{U} ,\mathcal{A} ,d) $. Hence there exist $ z\in \mathcal{U} $ such that 
 	\begin{center}
 		$\displaystyle\lim_{n\rightarrow \infty}  d(x_{n},z)= \theta $ 
 	\end{center}
 	Now, we shall show that $ z $ is fixed point of $ T $.
 	we get 
 	\begin{equation*}
 		d(Tx_{n},Tz) \preceq \tau d(x_{n}, z)^{\alpha} d(x_{n}, Tz)^{\beta} d(z,Tz)^{\eta}
 	\end{equation*}
 	Letting $ n \rightarrow \infty $ and using the  continuity of  $ T $.
 	\\We have $ d(z,Tz) = \theta$.
 \end{proof}
\begin{definition}
	A function $\psi: \mathcal{A}^{+} \rightarrow \mathcal{A}^{+}$ is called an alterning distance function  if it satisfies the following conditions:
	\begin{itemize}
		\item [(i)] $\psi$ is continuous,
		\item [(ii)]$\psi$ is nondecreasing,
		\item[(iii)] $\psi(x)= \theta$ if and only if $x=\theta$.
	\end{itemize}
\end{definition}
\begin{example}
Let	 $ \psi  :\mathbb{R}^{2}_{+} \rightarrow \mathbb{R}^{2}_{+}$ a function, for  $ t=(x,y) \in \mathbb{R}^{2}$ ,\\ given by: 
	$$   \psi(t)=  \left\{ \begin{array}{ll}
		(x,y) \: \:\text{if}\; x \leq 1 \:  \:\text{and} \:y \leq 1\\
		(x^{2},y) \: \:  \text{if}\; x >1\;\text{and} \;y \leq 1 \\
		(x,y^{2})   \: \:\text{if} \; x\leq 1 \:\text{and}\: y>1 \\
		(x^{2},y^{2})  \:\:\text{if}\; x>1 \: \text{and} \:y>1.        
	\end{array}\right. $$   
 $\psi$ is an alterning distance function. 
\end{example}
\begin{definition} Let $(\mathcal{U},\mathcal{A},d)$ be a complete $ C^{\ast}- $algebra-valued metric space and  $T :\mathcal{U}\rightarrow \mathcal{U}$. The mapping $T$ is defined as an interpolative weakly contractive mapping of the Reich type if it  
	satisfy the following condition:
	\begin{equation}
		\phi (d(Tx,Ty)) \preceq \phi(d(x,y)^{\alpha} d(x,Ty)^{\beta} d(y,Ty)^{\eta})- \psi(d(x,y)^{\alpha} d(x,Ty)^{\beta} d(y,Ty)^{\eta})
	\end{equation}
	$\forall x,y\in\mathcal{U}$, such that $\phi$ is an alterning distance function and $\psi$ a mapping from $\mathcal{A}_{+}$ to $\mathcal{A}_{+}$ is characterized by its lower semicontinuity satisfying the condition that $\psi(x)= \theta \Leftrightarrow x=\theta$. 
	\end{definition}
	\begin{theorem}\label{t7} Let $(\mathcal{U},\mathcal{A},d)$ be a complete $ C^{\ast}- $algebra-valued metric space and  $T :\mathcal{U}\rightarrow \mathcal{U}$ is an interpolative weakly contractive mapping of the Reich type, $\forall x,y\in\mathcal{U}$ with $x\not= Tx$ and $y\not= Ty$. Let $\alpha, \beta ,\eta \in ]0,1[$ such that $\alpha+\beta+\eta =1$.
		If there exists a point $x_{0}\in\mathcal{U}$ satisfying $d(x_{0},Tx_{0})\preceq I$ then,  $T$ has a fixed point in $\mathcal{U}$. 
		
	\end{theorem}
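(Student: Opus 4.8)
The plan is to run the Picard iteration as in the earlier theorems, but — since no geometric factor $\tau^{n}$ is available here — to replace the summable-decay argument by a monotonicity-plus-limit argument adapted to the weak contraction. Fix $x_{0}\in\mathcal{U}$ with $d(x_{0},Tx_{0})\preceq I$ and set $x_{n+1}=Tx_{n}$. If $x_{n}=x_{n+1}$ for some $n$ we are done, so assume $x_{n}\not=Tx_{n}$ for all $n$ and abbreviate $a_{n}=d(x_{n},x_{n+1})$. Feeding $x=x_{n-1}$, $y=x_{n}$ into the defining inequality and using (as in the companion theorem for \eqref{e12}) that the three factors become $d(x_{n-1},x_{n})=a_{n-1}$, $d(x_{n-1},Tx_{n-1})=a_{n-1}$, $d(x_{n},Tx_{n})=a_{n}$, we get
$$\phi(a_{n})\preceq\phi\big(a_{n-1}^{\alpha+\beta}a_{n}^{\eta}\big)-\psi\big(a_{n-1}^{\alpha+\beta}a_{n}^{\eta}\big)\preceq\phi\big(a_{n-1}^{\alpha+\beta}a_{n}^{\eta}\big),$$
because $\psi$ is $\mathcal{A}_{+}$-valued. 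Since $\phi$ is nondecreasing this yields $a_{n}\preceq a_{n-1}^{\alpha+\beta}a_{n}^{\eta}$, and dividing out $a_{n}^{\eta}$ while using $\alpha+\beta=1-\eta$ gives $a_{n}^{1-\eta}\preceq a_{n-1}^{1-\eta}$, hence $a_{n}\preceq a_{n-1}$. So $\{a_{n}\}$ is a non-increasing sequence of positive elements, bounded below by $\theta$ and above by $a_{0}\preceq I$, and therefore converges to some $a\succeq\theta$.

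Next I would pass to the limit in the displayed inequality: continuity of $\phi$, lower semicontinuity of $\psi$, and the identity $a^{\alpha+\beta}a^{\eta}=a^{\alpha+\beta+\eta}=a$ (all powers of $a$ commute, and $\alpha+\beta+\eta=1$) give $\phi(a)\preceq\phi(a)-\psi(a)$, so $\psi(a)\preceq\theta$ and hence $\psi(a)=\theta$; the property $\psi(x)=\theta\Leftrightarrow x=\theta$ then forces $a=\theta$. Thus $\lim_{n\to\infty}d(x_{n},x_{n+1})=\theta$.

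The main obstacle is upgrading ``consecutive distances tend to $\theta$'' to ``$\{x_{n}\}$ is Cauchy'', and I expect this to be the delicate step because the underlying order on $\mathbb{A}_{h}$ is only partial. I would argue by contradiction: if $\{x_{n}\}$ is not Cauchy there are $\varepsilon_{0}>0$ and indices $m_{k}<n_{k}$, with $n_{k}$ chosen minimal, such that $\|d(x_{m_{k}},x_{n_{k}})\|\geq\varepsilon_{0}$; using the triangle inequality of Definition \ref{d1} together with $d(x_{n},x_{n+1})\to\theta$ one shows that $\|d(x_{m_{k}},x_{n_{k}})\|$, $\|d(x_{m_{k}+1},x_{n_{k}+1})\|$ and the mixed terms all tend to the same limit $\ell>0$. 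Plugging $x=x_{m_{k}}$, $y=x_{n_{k}}$ into the weak-contraction inequality, letting $k\to\infty$, and invoking continuity of $\phi$ and lower semicontinuity of $\psi$ produces $\phi(L)\preceq\phi(L)-\psi(L)$ for the limiting positive element $L$ (with $\|L\|$ related to $\ell$), whence $L=\theta$ and $\ell=0$, a contradiction. Here one must pass through norms rather than the order at the crucial moment, which is the technically sensitive point of the argument.

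Finally, completeness gives $z\in\mathcal{U}$ with $d(x_{n},z)\to\theta$. To show $Tz=z$, apply the defining inequality with $x=x_{n}$, $y=z$:
$$\phi(d(x_{n+1},Tz))\preceq\phi\big(d(x_{n},z)^{\alpha}d(x_{n},Tx_{n})^{\beta}d(z,Tz)^{\eta}\big)-\psi\big(d(x_{n},z)^{\alpha}d(x_{n},Tx_{n})^{\beta}d(z,Tz)^{\eta}\big);$$
letting $n\to\infty$ and using the continuity of $T$ assumed throughout the paper together with $d(x_{n},Tx_{n})\to\theta$, the right-hand side tends to $\phi(\theta)-\psi(\theta)=\theta$, so $\phi(d(z,Tz))\preceq\theta$, i.e.\ $\phi(d(z,Tz))=\theta$, hence $d(z,Tz)=\theta$ and $Tz=z$.
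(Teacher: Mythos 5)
Your plan coincides with the paper's proof in all but one step: the monotonicity derivation $a_{n}\preceq a_{n-1}$ from $\phi$ nondecreasing and $\alpha+\beta=1-\eta$, the passage to a limit $\gamma$ of the non-increasing bounded sequence $\{a_{n}\}$ followed by $\phi(\gamma)\preceq\phi(\gamma)-\psi(\gamma)$ forcing $\gamma=\theta$, and the final squeeze giving $d(z,Tz)=\theta$ are exactly the paper's steps (including the shared, tacit assumption that a $\preceq$-non-increasing sequence of positive elements bounded below actually converges in $\mathcal{A}$, and the same reading of the middle factor as $d(x,Tx)^{\beta}$ rather than the literal $d(x,Ty)^{\beta}$ of the definition). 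Where you genuinely diverge is the Cauchy step. The paper applies the contractive condition directly to the pair $(x_{m},x_{p})$ and squeezes: since $d(x_{m},x_{m+1})^{\beta}$ and $d(x_{p},x_{p+1})^{\eta}$ tend to $\theta$, the right-hand side is claimed to tend to $\theta$, hence $\phi(d(x_{m+1},x_{p+1}))\rightarrow\theta$ and Cauchyness follows --- precisely the mechanism you yourself use afterwards to prove $Tz=z$, so you could have reused it here. Instead you import the classical minimal-index contradiction argument for weak contractions, and this is where your plan has an unclosed gap, which you yourself flag: in a $C^{\ast}$-algebra-valued metric only the norms $\Vert d(x_{m_{k}},x_{n_{k}})\Vert$ can be forced to converge along your subsequences; there need be no limiting positive element $L$ in $\mathcal{A}$ to which $\phi$ and $\psi$ can be applied, so the key inequality $\phi(L)\preceq\phi(L)-\psi(L)$ is not available without extra work (extracting convergence of the elements themselves is not automatic). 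To be fair, the paper's own squeeze is also loose --- it never controls the factor $d(x_{m},x_{p})^{\alpha}$, nor justifies that $\phi(b_{m,p})\rightarrow\theta$ forces $b_{m,p}\rightarrow\theta$ --- but its route at least avoids the missing limit element on which your contradiction argument hinges. In short: same skeleton as the paper, one substituted step, and that substituted step is the one that does not go through as stated.
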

	\begin{proof}Let $x_{0}\in \mathcal{U}$ and define a sequence $ \{x_{n}\} \in \mathcal{U} $ by $x_{0}=x$ and $ x_{n+1}= Tx_{n} $, $ \forall n \in \mathbb{N} $. Suppose that $ x_{n}\not= Tx_{n} $, $ \forall n \in \mathbb{N} $ .\\
		We have 	\begin{align*} \phi(d(x_{n},x_{n+1})) &\preceq \phi( d(x_{n-1},x_{n})^{\alpha}d(x_{n-1},Tx_{n-1})^{\beta}d(x_{n},Tx_{n})^{\eta})\\
			&-\psi( d(x_{n-1},x_{n})^{\alpha+\beta}d(x_{n},Tx_{n})^{\eta})\\
			&\preceq \phi(d(x_{n-1},x_{n})^{\alpha+\beta}d(x_{n},x_{n+1})^{\eta})
	\end{align*}
		This implies that \begin{align*}
			d(x_{n},x_{n+1})&\preceq d(x_{n-1},x_{n})^{\alpha+\beta}d(x_{n},x_{n+1})^{\eta}\\
			&\preceq d(x_{n-1},x_{n})^{\frac{\alpha+\beta}{1-\eta}}\\
				&\preceq d(x_{n-1},x_{n})\\
			&\preceq d(x_{0},x_{1})\\
			&\preceq I.	
		\end{align*}
		Then, there exists $\gamma \succeq \theta $ such that 
		\begin{equation*}
			\displaystyle\lim_{n\rightarrow \infty} d(x_{n},x_{n+1})=\gamma.
		\end{equation*}
		Letting $n\rightarrow \infty$ in \begin{equation*} \phi(d(x_{n},x_{n+1})) \preceq \phi( d(x_{n-1},x_{n})^{\alpha+\beta}d(x_{n},Tx_{n})^{\eta})-\psi(d(x_{n-1},x_{n})^{\alpha+\beta}d(x_{n},Tx_{n})^{\eta})
			\end{equation*}
			we obtain \begin{equation*}
				\phi(\gamma)\preceq \phi(\gamma^{\alpha+\beta}\gamma^{\eta})-\psi(\gamma^{\alpha+\beta}\gamma^{\eta})\preceq \phi(\gamma)-\psi(\gamma) .
			\end{equation*}
			This implies that 	\begin{equation*}
				\displaystyle\lim_{n\rightarrow \infty} d(x_{n},x_{n+1})=\theta.
			\end{equation*} 
			For $ m \geq 1 $ and $ p \geq 1 $, it follows that 
			\begin{align*}
				\phi(d(x_{m+1},x_{p+1})) &\preceq \phi( d(x_{m},x_{p})^{\alpha}d(x_{m},x_{m+1})^{\beta}d(x_{p},x_{p+1})^{\eta})-\psi(d(x_{m},x_{p})^{\alpha}d(x_{m},x_{m+1})^{\beta}d(x_{p},x_{p+1})^{\eta})\\
			 	&   \rightarrow \theta \;\;as \;\;m,p\rightarrow \infty. \\
			\end{align*} 
			Then, $\{x_{n}\}$ is a Cauchy sequence, and there exists $z\in\mathcal{U}$ such that $\displaystyle\lim_{n\rightarrow \infty} d(x_{n}, z)= \theta$.\\
			We have \begin{equation*}
				d(x_{n},z)^{\alpha}d(x_{n},x_{n+1})^{\beta} d(z,Tz)^{\eta}\rightarrow \theta.
			\end{equation*}
			We get \begin{equation*}
			\phi(d(z,Tz))\preceq\phi(d(x_{n},z)^{\alpha}d(x_{n},x_{n+1})^{\beta} d(z,Tz)^{\eta})-\psi(	d(x_{n},z)^{\alpha}d(x_{n},x_{n+1})^{\beta} d(z,Tz)^{\eta}).
			\end{equation*}
			Hence $d(z,Tz)=\theta$.
		\end{proof}
		\begin{corollary}Let $(\mathcal{U},\mathcal{A},d)$ be a complete $ C^{\ast}- $algebra-valued metric space and  $T :\mathcal{U}\rightarrow \mathcal{U}$ a mapping such that there exists $k\in]0,1[$
			\begin{equation*}
				d(Tx,Ty)\preceq k[d(x,y)^{\alpha}d(x,Tx)^{\beta} d(y,Ty)^{\eta}]
			\end{equation*}
			where $\alpha,\beta,\eta \in]0,1[$ and $\alpha
		+\beta+\eta =1$.\\ If there exists a point $x_{0}\in\mathcal{U}$ satisfying $d(x_{0},Tx_{0})\preceq I$ then,  $T$ has a fixed point in $\mathcal{U}$. 
		\end{corollary}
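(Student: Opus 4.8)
The plan is to obtain this as an immediate consequence of Theorem \ref{t7} by specializing the altering distance function $\phi$ and the auxiliary function $\psi$. Concretely, I would take $\phi$ to be the identity map on $\mathcal{A}_{+}$ and define $\psi:\mathcal{A}_{+}\rightarrow\mathcal{A}_{+}$ by $\psi(t)=(1-k)t$.

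First I would check that $\phi=\mathrm{id}_{\mathcal{A}_{+}}$ is an altering distance function in the sense of the definition preceding Theorem \ref{t7}: it is continuous, it is nondecreasing with respect to $\preceq$ (trivially, since $t_{1}\preceq t_{2}$ yields $\phi(t_{1})\preceq\phi(t_{2})$), and $\phi(t)=\theta$ if and only if $t=\theta$. Next I would verify the required properties of $\psi$: since $1-k>0$, multiplication by the positive scalar $1-k$ sends positive elements to positive elements, so $\psi$ indeed maps $\mathcal{A}_{+}$ into $\mathcal{A}_{+}$; it is continuous, hence in particular lower semicontinuous; and $\psi(t)=(1-k)t=\theta$ forces $t=\theta$ because $1-k\neq 0$.

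With these choices, for any $x,y\in\mathcal{U}$ with $x\neq Tx$ and $y\neq Ty$, writing $M=d(x,y)^{\alpha}d(x,Tx)^{\beta}d(y,Ty)^{\eta}$ one computes $\phi(M)-\psi(M)=M-(1-k)M=kM$. Thus the hypothesis $d(Tx,Ty)\preceq k[d(x,y)^{\alpha}d(x,Tx)^{\beta}d(y,Ty)^{\eta}]$ is precisely the inequality $\phi(d(Tx,Ty))\preceq\phi(M)-\psi(M)$ that defines an interpolative weakly contractive mapping of the Reich type. Since moreover $\alpha,\beta,\eta\in\,]0,1[$ with $\alpha+\beta+\eta=1$ and there is a point $x_{0}\in\mathcal{U}$ with $d(x_{0},Tx_{0})\preceq I$, all the hypotheses of Theorem \ref{t7} are satisfied, and that theorem yields a fixed point of $T$.

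There is essentially no obstacle here: the only point worth a line of care is that $\psi(M)=(1-k)M\preceq M$, so that $\phi(M)-\psi(M)=kM$ is itself a positive element and the resulting contraction inequality is consistent; this is immediate from $0<k<1$. Everything else is a direct verification that the specialized functions $\phi$ and $\psi$ meet the structural requirements invoked in Theorem \ref{t7}.
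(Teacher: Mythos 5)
Your proof is correct and is essentially identical to the paper's own argument, which likewise applies Theorem \ref{t7} with $\phi(x)=x$ and $\psi(x)=(I-k)x$; you merely spell out the routine verifications that the paper leaves implicit. The only cosmetic mismatch (the corollary's $d(x,Tx)^{\beta}$ versus the $d(x,Ty)^{\beta}$ appearing in the definition used by Theorem \ref{t7}) is present in the paper itself and does not affect your reduction.
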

		\begin{proof}
			It is sufficient to apply $\phi(x)= x$ and $\psi(x)=(I-k)x$ in Theorem \ref{t7}.
		\end{proof}

\medskip

\section*{Declarations}

\medskip

\noindent \textbf{Availablity of data and materials}\newline
\noindent Not applicable.

\medskip

\noindent \textbf{Human and animal rights}\newline
\noindent We would like to mention that this article does not contain any studies
with animals and does not involve any studies over human being.

\medskip

\noindent \textbf{Conflict of interest}\newline
\noindent The authors declare that they have no competing interests.

\medskip

\noindent \textbf{Fundings} \newline
\noindent The authors declare that there is no funding available for this paper.

\medskip

\noindent \textbf{Authors' contributions}\newline
\noindent The authors equally conceived of the study, participated in its
design and coordination, drafted the manuscript, participated in the
sequence alignment, and read and approved the final manuscript. 

\medskip


\end{document}